\newtheorem{theorem}{Theorem}[section]
\newtheorem{rem} [theorem] {Remark}
\newtheorem{lemma}[theorem]{Lemma}
\newcommand{\ovprt}{\overline{\partial}}
\newcommand{\ovli}{\overline}
\newcommand{\dquer}{\overline\partial}
\newcommand{\dquers}{\overline\partial ^*_\varphi}
\newcommand{\boxphi}{\square_\varphi}
\newcommand{\levim}{\frac{\partial^2\varphi}{\partial z_j\partial\overline z_k}}
\numberwithin{equation}{section}
\title{ Spectrum of the $\ovprt$-Neumann Laplacian on the Fock space.}
\author{ Friedrich Haslinger}
\thanks{Partially supported by the FWF-grant  P23664.}
 \address{ F. Haslinger: Institut f\"ur Mathematik, Universit\"at Wien,
Nordbergstrasse 15, A-1090 Wien, Austria}
\email{ friedrich.haslinger@univie.ac.at}
\keywords{$\ovprt $-Neumann problem, spectrum, compactness}
\subjclass[2010] {Primary 32W05; Secondary  30H20, 35P10}
\begin{document}

\maketitle

\begin{abstract} ~\ The spectrum of the $\ovprt$-Neumann Laplacian on the Fock space
$L^2(\mathbb{C}^n, e^{-|z|^2})$ is explicitly computed. It turns out that it consists of positive integer eigenvalues each of which is of infinite multiplicity. Spectral analysis of the $\ovprt$-Neumann Laplacian on the Fock space is closely related to Schr\"odinger operators with magnetic field and to the complex Witten-Laplacian.
\end{abstract}

\section{Introduction.}~\\

The spectrum of the $\ovprt$-Neumann Laplacian for the ball and annulus was  explicitly computed by Folland (\cite{Fo72}), Siqi Fu (\cite{Fu1}) determined the spectrum for the polydisc showing that it needs not being purely discrete like for the usual Dirichlet Laplacian. Here we will exhibit the weighted case, where the weight function is $\varphi (z)=|z|^2,$ showing that the essential spectrum is nonempty, which is equivalent to the fact that the $\ovprt$-Neumann operator (the inverse to the $\ovprt$-Neumann Laplacian) fails to be compact (\cite{Dav}).

\vskip 0.3 cm

Let $\varphi : \mathbb C^n \longrightarrow \mathbb R^+ $ be a plurisubharmonic $\mathcal C^2$-weight function and define the space
$$L^2(\mathbb{C}^n, e^{-\varphi})=\{ f:\mathbb C^n \longrightarrow \mathbb C \ : \ \int_{\mathbb C^n}
|f|^2\, e^{-\varphi}\,d\lambda < \infty \},$$
where $\lambda$ denotes the Lebesgue measure, the space $L^2_{(0,q)}(\mathbb C^n, e^{\varphi} )$ of $(0,q)$-forms with coefficients in
$L^2(\mathbb{C}^n, e^{-\varphi}),$ for $1\le q \le n.$ 
Let 
$$( f,g)_\varphi=\int_{\mathbb{C}^n}f \,\overline{g} e^{-\varphi}\,d\lambda$$
denote the inner product and 
$$\| f\|^2_\varphi =\int_{\mathbb{C}^n}|f|^2e^{-\varphi}\,d\lambda $$
the norm in $L^2(\mathbb{C}^n, e^{-\varphi}).$

We consider the weighted
$\ovprt $-complex 
$$
L^2_{(0,q-1)}(\mathbb C^n , e^{-\varphi} )\underset{\underset{\ovprt_\varphi^* }
\longleftarrow}{\overset{\ovprt }
{\longrightarrow}} L^2_{(0,q)}(\mathbb C^n , e^{-\varphi} )
\underset{\underset{\ovprt_\varphi^* }
\longleftarrow}{\overset{\ovprt }
{\longrightarrow}} L^2_{(0,q+1)}(\mathbb C^n , e^{-\varphi} ),
$$
where 
 for $(0,q)$-forms $u=\sum_{|J|=q}' u_J\,d\overline z_J$ with coefficients in $\mathcal{C}_0^\infty (\mathbb{C}^n)$ we have
$$\ovprt u = \sum_{|J|=q}\kern-1pt{}^{\prime}  \, \sum_{j=1}^n \frac{\partial u_J}{\partial \ovli z_j}\, d\ovli z_j \wedge d \ovli z_J,$$
and 
$$ \ovprt_\varphi ^* u = -  \sum_{|K|=q-1}\kern-7pt{}^{\prime}  \, \sum_{k=1}^n \delta_k u_{kK}\,d\ovli z_K,$$
where $\delta_k=\frac{\partial}{\partial z_k}-\frac{\partial \varphi}{\partial z_k}.$

There is an interesting  connection between $\ovprt $ and the theory of Schr\"odinger operators\index{Schr\"odinger operator} with magnetic fields,
see for example \cite{FS3}  for recent contributions exploiting this point of view.

The complex Laplacian on $(0,q)$-forms is defined as
$$\square_{\varphi,q} := \dquer  \,\dquers + \dquers \dquer,$$
where the symbol $\square_{\varphi,q} $ is to be understood as the maximal closure of the operator initially defined on forms with coefficients in $\mathcal{C}_0^\infty$, i.e., the space of smooth functions with compact support.

$\square_{\varphi,q} $ is a selfadjoint and positive operator, which means that 
$$(\square_{\varphi,q} f,f)_\varphi \ge 0 \ , \   {\text{for}} \  f\in dom (\boxphi ).$$
The associated Dirichlet form is denoted by 
\begin{equation}\label{diri}
Q_\varphi (f,g)= ( \dquer f,\dquer g)_\varphi + ( \dquers f ,\dquers g)_\varphi, 
\end{equation}
for $f,g\in dom (\dquer ) \cap dom (\dquers ).$ The weighted $\dquer $-Neumann operator
$N_{\varphi, q} $ is - if it exists - the bounded inverse of $\square_{\varphi,q} .$ 
\vskip 0.5 cm
We indicate that a square integrable $(0,1)$-form $f=\sum_{j=1}^n f_j\,d\ovli z_j$ belongs to  $dom(\dquers)$ if and only if 
\begin{equation*}
 e^{\varphi }\sum_{j=1}^n\frac{\partial}{\partial z_j}\left( f_je^{-\varphi}\right) \in L^2(\mathbb{C}^n, e^{-\varphi} ),
\end{equation*}
where the derivative is to be taken in the sense of distributions,
 and that forms with coefficients in $\mathcal{C}_0^\infty(\mathbb{C}^n)$ are dense in $dom(\dquer)\cap dom(\dquers)$ in the graph norm $f\mapsto (\Vert \dquer f\Vert _\varphi^2+\Vert \dquers f\Vert _\varphi^2)^\frac{1}{2}$ (see \cite{GaHa}).
\vskip 0.5 cm
We consider  the Levi - matrix 
$$M_\varphi=\left(\levim\right)_{jk}$$
of $\varphi $ and suppose that
 the sum $s_q$ of any $q$ (equivalently: the smallest $q$) eigenvalues of $M_\varphi$ satisfies
\begin{equation}\label{sq}
\liminf_{|z|\to \infty}s_q(z) > 0.
\end{equation}
We show that (\ref{sq}) implies that there exists a continuous linear operator
$$N_{\varphi, q} : L^2_{(0,q)}( \mathbb{C}^n, e^{-\varphi}) \longrightarrow L^2_{(0,q)}( \mathbb{C}^n, e^{-\varphi}),$$
such that $\square_{\varphi,q} \circ N_{\varphi,q} u = u,$ for any $u\in L^2_{(0,q)}( \mathbb{C}^n, e^{-\varphi}).$
\vskip 0.3 cm
If we suppose that  the sum $s_q$ of any $q$ (equivalently: the smallest $q$) eigenvalues of $M_\varphi$ satisfies
\begin{equation}\label{sqcomp}
\lim_{|z|\to \infty}s_q(z) =\infty .
\end{equation}
Then the $\ovprt$-Neumann operator $N_{\varphi, q} : L^2_{(0,q)}( \mathbb{C}^n, e^{-\varphi})\longrightarrow L^2_{(0,q)}( \mathbb{C}^n, e^{-\varphi})$ is compact (see \cite{Has5}, \cite{Has6} for further details).

To find the canonical solution to $\ovprt f=u,$ where $u \in L^2_{(0,1)}( \mathbb{C}^n, e^{-\varphi}) $ is a given $(0,1)$-form such that $\ovprt u=0,$ one can take $f= \ovprt^*_\varphi \, N_{\varphi, 1} u$ and $f$ will also satisfy $f\perp {\text{ker}}\ovprt.$ For further results on the canonical solution operator to $\ovprt$ see \cite{Has01_2} and \cite{HaLa}.
\vskip 0.3 cm
If the weight function is $\varphi (z)=|z|^2,$ the corresponding Levi matrix $M_\varphi$ is the identity. The space $A^2(\mathbb{C}^n, e^{-|z|^2})$ of entire functions belonging to $L^2(\mathbb{C}^n, e^{-|z|^2})$ is the Fock space, which plays an important role in quantum mechanics. In this case
\begin{equation}\label{box0}
\square_{\varphi,0} \, u= \ovprt_\varphi ^* \,\ovprt u = -\frac{1}{4}\, \triangle u + \sum_{j=1}^n \overline z_j \, u_{ \overline z_j},
\end{equation}
where $u\in {\text{dom}} \, \square_{\varphi,0} \subseteq L^2(\mathbb{C}^n, e^{-|z|^2})$ and
\begin{equation}\label{boxn}
\square_{\varphi,n} \, u = \ovprt \, \ovprt_\varphi ^* u = -\frac{1}{4}\, \triangle u + \sum_{j=1}^n \overline z_j \, u_{ \overline z_j} +n \, u,
\end{equation}
where $u\in {\text{dom}}\, \square_{\varphi,n} \subseteq L^2_{(0,n)}(\mathbb{C}^n, e^{-|z|^2}).$

For $n=1$ there is a connection to Schr\"odinger operators with magnetic field (see \cite{AHS} for properties of its spectrum), and to Dirac and Pauli operators \cite{HaHe}: the operators
$$P_+= e^{-|z|^2 /2}\,  \ovprt \, \ovprt^*_\varphi \ e^{|z|^2 /2}, \ \ P_- =e^{-|z|^2 /2}\,  \ovprt^*_\varphi \, \ovprt \ e^{|z|^2 /2}$$
defined on $L^2(\mathbb C)$ are the Pauli operators, $P_+$ is also a Schr\"odinger operator with magnetic field and the square of the corresponding Dirac operator satisfies
$$ \mathcal D^2=\left(
\begin{array}{cc}
 P_-&0\\0&P_+
\end{array}\right).$$

For $n>1$ and $1\le q \le n-1$ the $\ovprt$-Neumann Laplacian $\square_{\varphi,q}$ acts diagonally (see \cite{has3}): for 
$$u=\sum_{|J|=q}\kern-1pt{}^{\prime} u_J\,d\overline z_J  \in {\text{dom}}\, \square_{\varphi,q} \subseteq L^2_{(0,q)}(\mathbb{C}^n, e^{-|z|^2})$$ 
we have
\begin{equation}\label{boxq}
\square_{\varphi,q} \, u = (\ovprt \, \ovprt_\varphi ^* + \ovprt_\varphi ^* \,\ovprt ) \, u = \sum_{|J|=q}\kern-1pt{}^{\prime} \left (
 -\frac{1}{4}\, \triangle u_J + \sum_{j=1}^n \overline z_j \, u_{J \overline z_j} +q \, u_J \right )\,d\overline z_J .
 \end{equation}

\section{Determination of the spectrum.}~\\

In order to determine the spectrum of $\square_{\varphi,q}$ for $\varphi (z)=|z|^2$ we use the following lemma (see \cite{Dav}, Lemma 1.2.2):

\begin{lemma}\label{davies}
Let $H$ be a symmetric operator on a Hilbert space $\mathcal{H}$ with domain $L$ and let $(f_k)_{k=1}^\infty$ be a complete orthonormal set in $\mathcal{H}.$ If each $f_k$ lies in $L$ and there exist $\mu_k \in \mathbb{R}$ such that $Hf_k=\mu_k f_k$ for every $k,$ then $H$ is essentially self-adjoint . Moreover, the spectrum of $\overline H$ is the closure in $\mathbb{R}$ of the set of all $\mu_k.$
\end{lemma}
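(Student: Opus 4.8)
The plan is to treat the two assertions—essential self-adjointness and the identification of the spectrum—separately, reducing everything to the fact that $\overline{H}$ is the diagonal operator associated with the orthonormal eigenbasis $(f_k)$.

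First I would establish essential self-adjointness through the standard deficiency criterion: a symmetric operator is essentially self-adjoint precisely when $\mathrm{ran}(H+i)$ and $\mathrm{ran}(H-i)$ are both dense, equivalently when $\ker(H^* \mp i) = \{0\}$. Since $Hf_k = \mu_k f_k$ with $\mu_k \in \mathbb{R}$, we have $(H \pm i)f_k = (\mu_k \pm i) f_k$ with $\mu_k \pm i \neq 0$, so $f_k = (\mu_k \pm i)^{-1}(H \pm i) f_k$ lies in $\mathrm{ran}(H \pm i)$. As $(f_k)$ is complete, each of these ranges contains the dense subspace spanned by the $f_k$, hence is dense, and $H$ is essentially self-adjoint. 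Denote its self-adjoint closure by $\overline{H}$.

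Next I would introduce the diagonal operator $T$ defined by $Tg = \sum_k \mu_k \langle g, f_k \rangle f_k$ on the domain $\{g : \sum_k \mu_k^2 |\langle g, f_k\rangle|^2 < \infty\}$, which is self-adjoint. The point is to show $\overline{H} = T$. For $g \in \mathrm{dom}(H)$ the symmetry of $H$ gives $\langle Hg, f_k \rangle = \langle g, H f_k \rangle = \mu_k \langle g, f_k\rangle$, so the Fourier coefficients of $Hg$ are exactly $\mu_k \langle g, f_k\rangle$; by Parseval $\sum_k \mu_k^2 |\langle g, f_k\rangle|^2 = \|Hg\|^2 < \infty$, whence $g \in \mathrm{dom}(T)$ and $Hg = Tg$. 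Thus $H \subseteq T$, and since $H$ is essentially self-adjoint it admits a unique self-adjoint extension, forcing $\overline{H} = T$.

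Finally I would read the spectrum off the diagonal form. Each $\mu_k$ is an eigenvalue of $\overline{H} = T$, and as the spectrum is closed, $\overline{\{\mu_k\}} \subseteq \sigma(\overline{H})$. For the reverse inclusion, note $\sigma(\overline{H}) \subseteq \mathbb{R}$ by self-adjointness, so it suffices to take real $\lambda$ with $\delta := \mathrm{dist}(\lambda, \{\mu_k\}) > 0$ and exhibit a resolvent: the operator $R_\lambda g = \sum_k (\mu_k - \lambda)^{-1}\langle g, f_k\rangle f_k$ is bounded with $\|R_\lambda\| \le \delta^{-1}$ by Parseval and inverts $T - \lambda$ on both sides, so $\lambda \in \rho(\overline{H})$. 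I expect the main obstacle to be the identification $\overline{H} = T$: the inclusion $H \subseteq T$ only controls $H$ on its given domain $L$, and one must verify that passing to the closure does not enlarge the action beyond the diagonal one—this is precisely where uniqueness of the self-adjoint extension of an essentially self-adjoint operator becomes indispensable, and it is the step that converts the bare eigenvalue data into full spectral information.
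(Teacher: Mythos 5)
Your proof is correct and complete. Note, however, that the paper contains no proof of this lemma to compare against: it is quoted verbatim from Davies (\cite{Dav}, Lemma 1.2.2) and used as a black box. Your argument is the standard one for this result, and essentially the one found in Davies: density of $\mathrm{ran}(H\pm i)$ from the eigenvector relations $(H\pm i)f_k=(\mu_k\pm i)f_k$, identification of $\overline{H}$ with the diagonal operator $T$ via $H\subseteq T$ together with uniqueness of the self-adjoint extension, and the explicit diagonal resolvent for $\lambda$ at positive distance $\delta$ from $\{\mu_k\}$. The one step you assert without checking---that $R_\lambda$ maps into $\mathrm{dom}(T)$ so that $(T-\lambda)R_\lambda g=g$ makes sense---is routine, since $|\mu_k/(\mu_k-\lambda)|\le 1+|\lambda|/\delta$ is uniformly bounded; with that observation supplied, nothing is missing.
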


\vskip 0.3 cm

In sake of simplicity and in order to explain the general method, we start with the complex one-dimensional case. Looking for the eigenvalues $\mu$ of  $\square_{\varphi,0} $ we have by \eqref{box0}:
\begin{equation}\label{spect1}
\square_{\varphi,0} \, u= -u_{z\, \overline z} + \overline z u_{\overline z} = \mu u
\end{equation}
It is clear that the space $A^2(\mathbb{C}^n, e^{-|z|^2})$ is contained in the eigenspace of the eigenvalue $\mu =0.$

For any positive integer $k$ the antiholomorphic monomial $\overline z^k$ is an eigenfunction for the eigenvalue $\mu =k.$ 

In the following we denote by $\mathbb{N}_0=\mathbb{N}\cup \{0\}.$

\begin{lemma}\label{eigenfun}
Let $n=1 .$  For $k\in \mathbb{N}_0$ and $m \in \mathbb{N}$  the functions
\begin{equation}\label{eig1}
u_{k,m}(z,\overline z)= \overline z^{k+m} \, z^m + \sum_{j=1}^m  \frac{(-1)^j (k+m)!\,  m!}{j! \, (k+m-j)!\, (m-j)!} \, \overline z^{k+m-j}\, z^{m-j}
\end{equation}
are eigenfunctions for the eigenvalue $k+m$ of the operator $\square_{\varphi,0} \, u= -u_{z\, \overline z} + \overline z u_{\overline z}.$

For $k\in \mathbb{N}$ and $m \in \mathbb{N}_0$ the functions
\begin{equation}\label{eig2}
v_{k,m}(z,\overline z)= \overline z^{k} \, z^{k+m} + \sum_{j=1}^k  \frac{(-1)^j (k+m)!\,  k!}{j! \, (k+m-j)!\, (k-j)!} \, \overline z^{k-j}\, z^{k+m-j}
\end{equation}
are eigenfunctions for the eigenvalue $k$ of the operator $\square_{\varphi,0} \, u= -u_{z\, \overline z} + \overline z u_{\overline z}.$
\end{lemma}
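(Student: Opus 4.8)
The plan is to reduce everything to the action of $\square_{\varphi,0}$ on a single monomial and to exploit the resulting triangular structure. A direct computation gives, for $a\in\mathbb{N}$ and $b\in\mathbb{N}_0$,
$$\square_{\varphi,0}\bigl(\overline z^{\,a} z^{b}\bigr) = -\partial_z\partial_{\overline z}\bigl(\overline z^{\,a} z^{b}\bigr) + \overline z\,\partial_{\overline z}\bigl(\overline z^{\,a} z^{b}\bigr) = a\,\overline z^{\,a} z^{b} - ab\,\overline z^{\,a-1} z^{b-1}.$$
The crucial observation is that the operator preserves the difference $a-b$ of the two exponents and, within each such class, is triangular: it sends $\overline z^{\,a}z^{b}$ to a multiple of itself, with diagonal coefficient equal to the antiholomorphic degree $a$, plus a multiple of the strictly lower monomial $\overline z^{\,a-1}z^{b-1}$. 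In particular the monomials $\overline z^{\,k}$ (the case $b=0$) are already eigenfunctions for the eigenvalue $k$, which explains the simple observations preceding the statement.

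First I would write each proposed eigenfunction as a finite sum of monomials sharing a fixed exponent difference. For $u_{k,m}$ every term $\overline z^{\,k+m-j}z^{m-j}$ has exponent difference exactly $k$, and for $v_{k,m}$ every term $\overline z^{\,k-j}z^{k+m-j}$ has exponent difference exactly $-m$; thus both stay inside a single triangular block. Applying the monomial formula termwise and collecting the coefficient of a fixed monomial, the eigenvalue equation $\square_{\varphi,0}u_{k,m}=(k+m)u_{k,m}$ collapses to a two-term recursion for the coefficients $c_j$ of \eqref{eig1},
$$c_j\,(k+m-j) - c_{j-1}\,(k+m-j+1)(m-j+1) = (k+m)\,c_j,$$
that is, $j\,c_j = -(k+m-j+1)(m-j+1)\,c_{j-1}$, and analogously for the coefficients of \eqref{eig2} with eigenvalue $k$.

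It then remains to verify that the closed forms given in \eqref{eig1} and \eqref{eig2} solve these recursions; this is a routine manipulation of factorials, obtained by computing the ratio $c_j/c_{j-1}$ directly from the stated expressions. The only point requiring a little care — and the closest thing to an obstacle — is the consistency of the boundary terms: one must check that the lowering part of $\square_{\varphi,0}$ vanishes exactly at the bottom of each sum, since the factor $m-j$ annihilates the last term of $u_{k,m}$ at $j=m$ and the factor $k-j$ does the same for $v_{k,m}$ at $j=k$, so that the eigenvalue equation closes up without generating a monomial outside the asserted range. Once this is confirmed, each $u_{k,m}$ and $v_{k,m}$ is a genuine eigenfunction with the claimed eigenvalue.
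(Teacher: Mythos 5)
Your proposal is correct and is essentially the paper's own argument in slightly tidier packaging: the paper likewise applies the operator termwise, compares coefficients of $\overline z^{\,k+m-j}z^{m-j}$ (resp.\ $\overline z^{\,k-j}z^{k+m-j}$), and arrives at exactly your two-term recursion $j\,c_j=-(k+m-j+1)(m-j+1)\,c_{j-1}$, solving it for the stated factorial coefficients where you instead verify them via the ratio $c_j/c_{j-1}$. Your explicit monomial formula $\square_{\varphi,0}(\overline z^{\,a}z^{b})=a\,\overline z^{\,a}z^{b}-ab\,\overline z^{\,a-1}z^{b-1}$ and the boundary check at $j=m$ (resp.\ $j=k$) are sound and match the paper's computation.
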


\begin{proof}
To prove \eqref{eig1} we set
$$u_{k,m}(z,\overline z)= \overline z^{k+m} \, z^m + a_1 \overline z^{k+m-1}\, z^{m-1} + a_2 \overline z^{k+m-2}\, z^{m-2} + \dots + a_{m-1} \overline z^{k+1} \, z + a_m \overline z^k
$$
and compute
\vskip 0.2 cm
$\frac{\partial^2}{\partial z \partial \overline z}\, u_{k,m}(z,\overline z)$
$$ = (k+m) m \overline z^{k+m-1} \, z^{m-1} + a_1 (k+m-1)(m-1) \overline z^{k+m-2}\, z^{m-2} + \dots + a_{m-1} (k+1) \overline z^k
$$
as well as
\vskip 0.2 cm
$\overline z \, \frac{\partial}{\partial \overline z}\, u_{k,m}(z,\overline z)$
$$= (k+m) \overline z^{k+m}\, z^m + a_1(k+m-1) \overline z^{k+m-1} \, z^{m-1} + \dots + a_{m-1} (k+1) \overline z^{k+1}\, z + a_m k \overline z^k,$$
which implies that the function $u_{k,m}$ is an  eigenfunction for the eigenvalue $\mu=k+m$ and from \eqref{spect1} we obtain, comparing  the highest exponents of $\overline z$ and $z,$ 
$$(k+m)m-a_1(k+m-1) = -(k+m)a_1,$$
hence $a_1=-(k+m)m.$ Comparing the next lower exponents we get
$$a_1(k+m-1)(m-1)-a_2(k+m-2)= -a_2(k+m)$$ 
and $a_2=\frac{1}{2}\, (k+m)(k+m-1)m(m-1).$
In general we find that for $j=1,2, \dots m$
$$a_j=\frac{(-1)^j (k+m)!\,  m!}{j! \, (k+m-j)!\, (m-j)!},$$
which proves \eqref{eig1}.

In order to show \eqref{eig2} we set 
$$v_{k,m}(z,\overline z)= \overline z^{k} \, z^{k+m} + b_1 \overline z^{k-1}\, z^{k+m-1} + b_2 \overline z^{k-2}\, z^{k+m-2} + \dots + b_{k-1} \overline z \, z^{m+1} + b_k  z^m
$$
and compute
\vskip 0.2 cm
$\frac{\partial^2}{\partial z \partial \overline z}\, v_{k,m}(z,\overline z)$
$$ = k(k+m)  \overline z^{k-1} \, z^{k+m-1} + b_1(k-1) (k+m-1) \overline z^{k-2}\, z^{k+m-2} + \dots + b_{k-1}  \overline z \,z^{m+1}
$$
as well as
\vskip 0.2 cm
$$\overline z \, \frac{\partial}{\partial \overline z}\, v_{k,m}(z,\overline z)
= k \overline z^{k}\, z^{k+m} +b_1 (k-1)\overline z^{k-1}\, z^{k+m-1} +  \dots + b_{k-1}  \overline z\, z^{m+1} $$
which implies that the function $v_{k,m}$ is an  eigenfunction for the eigenvalue $\mu=k,$ for each $m\in \mathbb{N}$ and from \eqref{spect1} we obtain, comparing  the highest exponents of $\overline z$ and $z,$ 
$$k(k+m)-b_1(k-1) = -k b_1,$$
hence $b_1=-(k+m)k.$ Comparing the next lower exponents we get
$$b_1(k-1)(k+m-1)-b_2(k-2)= -b_2k$$ 
and $b_2=\frac{1}{2}\, (k+m)(k+m-1)k(k-1).$
In general we find that for $j=1,2, \dots k$
$$b_j=\frac{(-1)^j (k+m)!\,  k!}{j! \, (k+m-j)!\, (k-j)!},$$
which proves \eqref{eig2}.

\end{proof}

Now we are able to prove
\begin{theorem}\label{spec1} Let $n=1$ and $\varphi (z)=|z|^2.$ The spectrum of $\square_{\varphi, 0}$ consists of all non-negative integers $\{ 0, 1, 2, \dots\}$ each of which is of infinite multiplicity, so $0$ is the bottom of the essential spectrum.  The spectrum of $\square_{\varphi, 1}$ consists of all positive integers $\{ 1, 2, 3, \dots\}$  each of which is of infinite multiplicity.

\end{theorem}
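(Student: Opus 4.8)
The plan is to deduce both statements from Lemma~\ref{davies} applied to the operator $\square_{\varphi,0}u = -u_{z\overline{z}} + \overline{z}\,u_{\overline{z}}$ of \eqref{spect1}. Concretely, I would take as candidate orthonormal set (after normalization) the family consisting of the holomorphic monomials $z^\ell$, $\ell\in\mathbb{N}_0$ (all with eigenvalue $0$), together with the eigenfunctions $u_{k,m}$ and $v_{k,m}$ produced in Lemma~\ref{eigenfun}. Once this family is shown to be a complete orthonormal system lying in the domain, Lemma~\ref{davies} yields that the spectrum of $\square_{\varphi,0}$ is the closure in $\mathbb{R}$ of the set of the corresponding eigenvalues.

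First I would record the invariant-subspace structure. A direct computation gives $\square_{\varphi,0}(\overline{z}^a z^b) = a\,\overline{z}^a z^b - ab\,\overline{z}^{a-1}z^{b-1}$, so $\square_{\varphi,0}$ leaves invariant each subspace $V_\ell$ spanned by the monomials of fixed angular frequency $\ell=b-a\in\mathbb{Z}$, and acts on the natural basis of $V_\ell$ as a triangular (bidiagonal) operator whose diagonal entries are $0,1,2,\dots$ when $\ell\ge 0$ and $|\ell|,|\ell|+1,\dots$ when $\ell<0$. This produces orthogonality almost for free: monomials with distinct $\ell$ are orthogonal in $L^2(\mathbb{C},e^{-|z|^2})$ by integration in the angular variable, so eigenfunctions in different $V_\ell$ are orthogonal, while within a fixed $V_\ell$ the diagonal entries are pairwise distinct, hence the eigenfunctions there carry distinct eigenvalues and are orthogonal by self-adjointness. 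The functions $z^\ell$, $u_{k,m}$ and $v_{k,m}$ are exactly the eigenvectors of these triangular actions.

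For completeness I would use the triangular structure again: inside each $V_\ell$ the eigenfunction attached to the diagonal entry $a$ has leading monomial $\overline{z}^a z^{a+\ell}$ and strictly lower-order corrections, so the eigenfunctions of $V_\ell$ span the same space as the monomials of frequency $\ell$; ranging over $\ell$ shows the whole family spans all polynomials in $z,\overline{z}$, which are dense in $L^2(\mathbb{C},e^{-|z|^2})$. Thus the system is complete and Lemma~\ref{davies} gives $\sigma(\square_{\varphi,0}) = \overline{\{0,1,2,\dots\}} = \{0,1,2,\dots\}$. The multiplicities follow by counting: eigenvalue $0$ is carried by all $z^\ell$, $\ell\in\mathbb{N}_0$, and for each $N\ge 1$ the functions $v_{N,m}$, $m\in\mathbb{N}_0$, lie in distinct $V_m$ and all have eigenvalue $N$; both families are infinite, so every eigenvalue has infinite multiplicity. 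Since an eigenvalue of infinite multiplicity belongs to the essential spectrum and $\square_{\varphi,0}\ge 0$ with $0$ an eigenvalue, $0$ is the bottom of the essential spectrum.

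Finally, the $(0,1)$-statement I would obtain with no further spectral computation: by \eqref{boxn} with $n=1$ the operator $\square_{\varphi,1}$ acts on the single coefficient of a $(0,1)$-form as $-u_{z\overline{z}}+\overline{z}u_{\overline{z}}+u = \square_{\varphi,0}+\mathrm{Id}$, so its spectrum is that of $\square_{\varphi,0}$ translated by $1$, namely $\{1,2,3,\dots\}$, each eigenvalue again of infinite multiplicity. I expect the only genuinely delicate points to be the completeness/density step, and the verification that the polynomial eigenfunctions indeed lie in the domain of the self-adjoint realization and that Lemma~\ref{davies} identifies this realization with the $\ovprt$-Neumann Laplacian $\square_{\varphi,0}$; the orthogonality and the multiplicity bookkeeping should then be routine.
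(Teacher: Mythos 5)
Your proposal follows essentially the same route as the paper's proof: the same eigenfunctions $u_{k,m}$, $v_{k,m}$ from Lemma \ref{eigenfun}, completeness of a polynomial eigenbasis, Lemma \ref{davies} to identify the spectrum as the closure of the eigenvalue set, and the observation via \eqref{boxn} that for $n=1$ the operator $\square_{\varphi,1}$ acts on the coefficient of a $(0,1)$-form as $\square_{\varphi,0}+\mathrm{Id}$. Your decomposition into the angular-frequency sectors $V_\ell$ is a genuine organizational improvement: it makes explicit both the orthogonality of the eigenfunctions and the triangularity that underlies the paper's assertion that every monomial $\overline z^\nu z^\kappa$ is a linear combination of eigenfunctions, points the paper leaves implicit. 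Note, though, that your density claim for polynomials in $L^2(\mathbb{C}, e^{-|z|^2})$ is exactly the fact the paper justifies via the completeness of the Hermite system $\{H_j(x)H_{k-j}(y)\}$ with $x=\frac{1}{2}(z+\overline z)$, $y=\frac{i}{2}(\overline z - z)$; you should cite such a result rather than leave it as an assertion, since you yourself flag it as the delicate step.

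One concrete slip: the candidate family as you list it is not complete. Lemma \ref{eigenfun} defines $u_{k,m}$ only for $m\in\mathbb{N}$, so the bottom eigenvector of each negative-frequency sector $V_{-k}$, namely the antiholomorphic monomial $\overline z^{k}$ with eigenvalue $k$, appears nowhere in your list of ``$z^\ell$, $u_{k,m}$, $v_{k,m}$'' (every $v_{k,m}$ has leading term $\overline z^{k} z^{k+m}$, hence nonnegative frequency, and every $u_{k,m}$ in $V_{-k}$ has eigenvalue $k+m>k$). Since $\overline z^k$ lies in $V_{-k}$ and carries an eigenvalue distinct from those of all other listed eigenfunctions in that sector, it is orthogonal to your entire family, which therefore fails completeness by exactly these vectors; your statement that $z^\ell$, $u_{k,m}$, $v_{k,m}$ are ``exactly the eigenvectors of these triangular actions'' is off by the same omission. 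The repair is immediate---adjoin the monomials $\overline z^k$, $k\in\mathbb{N}$, equivalently allow $m=0$ in \eqref{eig1}, where the correction sum is empty---and the paper does include them explicitly before Lemma \ref{eigenfun}. With that repair and a reference for the density step, your argument is correct and coincides in substance with the paper's.
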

\begin{proof} We already know that the whole Bergman space $A^2(\mathbb{C}, e^{-|z|^2})$ is contained in the eigenspace of the eigenvalue $0$ of the operator  $\square_{\varphi, 0}$ 
and, for any positive integer $k,$ the antiholomorphic monomial $\overline z^k$ is an eigenfunction for the eigenvalue $\mu =k.$
In addition all functions of the form $\overline z^\nu \, z^\kappa$ with $\nu, \kappa \in \mathbb{N}_0$ can be expressed as a linear combination of functions of the form \eqref{eig1} and \eqref{eig2}. For a fixed $k\in \mathbb{N}$ the functions of the \eqref{eig2} have infinite multiplicity as the parameter $m\in \mathbb{N}_0$ is free. So all eigenvalues are of infinite multiplicity. All the eigenfunctions considered so far yield a complete orthogonal basis of $L^2(\mathbb{C}, e^{-|z|^2}),$ since the Hermite polynomials $\{H_0(x)H_k(y), H_1(x)H_{k-1}(y), \dots, H_k(x) H_0(y)\}$ for $k=0,1,2, \dots$ form a complete orthogonal system in $L^2(\mathbb{R}^2, e^{-x^2-y^2})$ (see for instance \cite{Fo3}) and since $x=1/2(z+\overline z) \ , \ y=i/2 (\overline z -z)$ we can apply the Lemma \ref{davies}  and obtain that the spectrum of  $\square_{\varphi, 0}$ is $\mathbb{N}_0.$

The statement for the spectrum of $\square_{\varphi, 1}$ follows from \eqref{boxn}.
\end{proof}
For several variables we can adopt the method from above to obtain the following result
\begin{theorem}\label{sqspec} Let $n>1$ and $\varphi (z)= |z_1|^2 + \dots + |z_n|^2$ and $0\le q \le n.$ 
 The spectrum of $\square_{\varphi, q}$ consists of all  integers $\{ q, q+1, q+2, \dots\}$ each of which is of infinite multiplicity. 
 \end{theorem}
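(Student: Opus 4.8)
The plan is to reduce everything to the single scalar operator $L := -\frac{1}{4}\triangle + \sum_{j=1}^n \overline z_j\,\partial_{\overline z_j}$ acting on $L^2(\mathbb{C}^n, e^{-|z|^2})$. Indeed, by \eqref{box0}, \eqref{boxn} and \eqref{boxq} the Laplacian $\square_{\varphi,q}$ acts diagonally on the coefficients of a $(0,q)$-form, sending each term $u_J\,d\overline z_J$ to $(L u_J + q\,u_J)\,d\overline z_J$. It therefore suffices to compute $\operatorname{spec}(L)$ and then shift: $\operatorname{spec}(\square_{\varphi,q}) = \{\,m+q : m\in\operatorname{spec}(L)\,\}$, with the multiplicity of each eigenvalue multiplied by the number $\binom nq$ of admissible multi-indices $J$, which keeps it infinite. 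This uniformly covers $q=0$, $1\le q\le n-1$ and $q=n$.

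The key structural observation is that $\triangle = 4\sum_{j=1}^n \partial_{z_j}\partial_{\overline z_j}$, so $L = \sum_{j=1}^n L^{(j)}$, where $L^{(j)} := -\partial_{z_j}\partial_{\overline z_j} + \overline z_j\,\partial_{\overline z_j}$ is precisely the one-variable operator of \eqref{spect1} acting on the $j$-th variable only. Under the canonical unitary identification $L^2(\mathbb{C}^n, e^{-|z|^2}) = \bigotimes_{j=1}^n L^2(\mathbb{C}, e^{-|z_j|^2})$ the operators $L^{(j)}$ act on distinct tensor factors and hence commute. Consequently, if $w^{(1)},\dots,w^{(n)}$ are one-variable eigenfunctions with $L^{(j)} w^{(j)} = k_j\,w^{(j)}$ and $k_j\in\mathbb{N}_0$ (as furnished by Theorem \ref{spec1} and Lemma \ref{eigenfun}), the product $w^{(1)}\cdots w^{(n)}$ is an eigenfunction of $L$ for the eigenvalue $k_1+\dots+k_n$.

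First I would invoke the one-variable result: by the proof of Theorem \ref{spec1} the functions built in Lemma \ref{eigenfun}, together with the holomorphic monomials, form a complete orthogonal system of eigenfunctions of $L^{(j)}$ in $L^2(\mathbb{C}, e^{-|z_j|^2})$, with eigenvalues exhausting $\mathbb{N}_0$. Tensoring these systems over $j=1,\dots,n$ produces a complete orthogonal system in $L^2(\mathbb{C}^n, e^{-|z|^2})$ (a product of complete orthogonal systems is complete and orthogonal in the Hilbert-space tensor product), consisting of eigenfunctions of $L$. The set of eigenvalues thus obtained is $\{\,k_1+\dots+k_n : k_j\in\mathbb{N}_0\,\}=\mathbb{N}_0$, which is closed in $\mathbb{R}$; Lemma \ref{davies} then shows that $L$ is essentially self-adjoint and $\operatorname{spec}(L)=\mathbb{N}_0$. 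Adding $q$ and noting that the single nonzero slot of an eigenform $u_J\,d\overline z_J$ may carry any eigenfunction of $L$ yields $\operatorname{spec}(\square_{\varphi,q}) = \{q,q+1,q+2,\dots\}$.

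It remains to confirm that every eigenvalue has infinite multiplicity, which I regard as the only point requiring care beyond bookkeeping. For a fixed integer $m\ge 0$ the eigenvalue $m$ of $L$ is realized, for instance, by letting one factor contribute an eigenvalue-$m$ eigenfunction (already of infinite multiplicity in one variable by Theorem \ref{spec1}) while the remaining factors range over the infinite-dimensional eigenvalue-$0$ space, namely the Fock space $A^2(\mathbb{C}, e^{-|z_j|^2})$; hence each eigenspace is infinite-dimensional. After the shift by $q$ and the $\binom nq$-fold copying over the multi-indices $J$, every eigenvalue $q,q+1,\dots$ of $\square_{\varphi,q}$ retains infinite multiplicity. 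The main obstacle is therefore not a single hard estimate but the clean transfer of completeness from one variable to the tensor product, and the verification that the diagonal action of \eqref{boxq} preserves both the spectrum (up to the shift $+q$) and the infinite multiplicities.
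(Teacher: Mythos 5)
Your proposal is correct and follows essentially the same route as the paper: exploit the diagonal action \eqref{boxq} to reduce to the scalar operator shifted by $q$, take products of the one-variable eigenfunctions from Lemma \ref{eigenfun} (with Fock-space factors supplying eigenvalue $0$ and the infinite multiplicities), and conclude via Lemma \ref{davies}. The only difference is cosmetic: you obtain completeness from the tensor factorization $L^2(\mathbb{C}^n, e^{-|z|^2})\cong \bigotimes_{j=1}^n L^2(\mathbb{C}, e^{-|z_j|^2})$ and the fact that products of complete orthogonal systems are complete, whereas the paper verifies that every monomial $z_1^{\alpha_1}\overline z_1^{\beta_1}\cdots z_n^{\alpha_n}\overline z_n^{\beta_n}$ is a linear combination of the product eigenfunctions and refers back to the Hermite-polynomial argument in the proof of Theorem \ref{spec1}; these are the same completeness fact in different packaging.
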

 \begin{proof} 
Recall that the $\ovprt$-Neumann Laplacian $\square_{\varphi,q}$ acts diagonally and that
$$\square_{\varphi,q} \, u = \sum_{|J|=q}\kern-1pt{}^{\prime} \left (
 -\frac{1}{4}\, \triangle u_J + \sum_{j=1}^n \overline z_j \, u_{J \overline z_j} +q \, u_J \right )\,d\overline z_J .$$
 The factor $q$ in the last formula is responsible for the fact that the eigenvalues start with $q,$ which can be seen, in each component separately, by
 $$-\frac{1}{4}\, \triangle u_J + \sum_{j=1}^n \overline z_j \, u_{J \overline z_j} = (\mu -q)\, u_J .$$
 Now let 
  $k_1,k_2,\dots , k_n \in \mathbb{N}_0$ and $m_1, m_2, \dots ,m_n \in \mathbb{N}.$ Then  the function
 $$u_{k_1,m_1}(z_1,\overline z_1)\, u_{k_2,m_2}(z_2,\overline z_2)\dots \, u_{k_n,m_n}(z_n,\overline z_n)$$ 
 is component of an eigenfunction for the eigenvalue $\sum_{j=1}^n (k_j+m_j)$ of the operator $\square_{\varphi,q},$ which follows from \eqref{boxq} and \eqref{eig1}.
 
 Similarly it follows from \eqref{boxq}  and \eqref{eig2} that for $k,_1,k_2,\dots , k_n \in \mathbb{N}$ and $m_1, m_2, \dots ,m_n \in \mathbb{N}_0$ the function 
$$ v_{k_1,m_1}(z_1,\overline z_1)\, v_{k_2,m_2}(z_2,\overline z_2)\dots \, v_{k_n,m_n}(z_n,\overline z_n)$$ 
is an eigenfunction for the eigenvalue $\sum_{j=1}^n k_j.$

All other possible $n$-fold products with factors $u_{k_j, m_j}$ or  $v_{k_j, m_j}$ (also mixed) appear as eigenfunctions of $\square_{\varphi,q}.$

From this we obtain that all expressions of the form $z_1^{\alpha_1} \, \overline z_1^{\beta_1} \,  \dots \,z_n^{\alpha_n} \, \overline z_n^{\beta_n}$ for arbitrary $\alpha_j ,\beta_j  \in \mathbb{N}_0,\ j=1,\dots ,n,$ can be written as   a linear combination of eigenfunctions of $\square_{\varphi, q},$ which proves that all these eigenfunctions constitute a complete basis in $L^2_{(0,q)}(\mathbb{C}^n, e^{-|z|^2}),$ see the proof of Theorem \ref{spec1}. So we can again apply Lemma \ref{davies}.

 \end{proof}
\begin{rem} 
(i) Since in all cases the essential spectrum is non-empty, the corresponding $\ovprt$-Neumann operator fails to be with compact resolvent (see for instance \cite{Dav}).

(ii) If one considers the weight function 
$$\varphi (z)= (|z_1|^2+ |z_2|^2 + \dots + |z_n|^2)^{\alpha} \ \ {\text{for}} \  \alpha >1$$
 the situation is completely different: the operators $\square_{\varphi, q}$ are with compact resolvent (see \cite{HaHe}), so the essential spectrum must be empty.

\end{rem}
\vskip 0.3 cm

We can use the results from above to settle the corresponding questions for the so-called Witten-Laplacian which is defined on $L^2(\mathbb C^n).$

 For this purpose we set $Z_k=\frac{\partial}{\partial \overline z_k}+ \frac{1}{2}\, \frac{\partial \varphi}{\partial \overline z_k}$ and $Z^*_k=-\frac{\partial}{\partial z_k}+ \frac{1}{2}\, \frac{\partial \varphi}{\partial z_k}$ and we consider $(0,q)$-forms $h=\sum_{|J|=q} \kern-1pt{}^{\prime}  \, h_J \, d\overline z_J,$ where $\sum \kern-1pt{}^{\prime} $ means that we sum up only increasing multiindices $J=(j_1,\dots ,j_q)$ and where $d\overline z_J = d\overline z_{j_1} \wedge \dots \wedge d\overline z_{j_q}.$ We define 
\begin{equation}
\overline D_{q+1} h = \sum_{k=1}^n \sum_{|J|=q} \kern-1pt{}^{\prime}  \, Z_k(h_J)\, d\overline z_k \wedge d\overline z_J
\end{equation}
and
\begin{equation}
\overline D_q ^* h = \sum_{k=1}^n \sum_{|J|=q} \kern-1pt{}^{\prime}  \, Z_k^*(h_J)\, d\overline z_k  \rfloor d\overline z_J ,
\end{equation}
where $d\overline z_k  \rfloor d\overline z_J$ denotes the contraction, or interior multiplication\index{interior multiplication} by $d\overline z_k,$ i.e. we have
$$\langle \alpha , d\overline z_k  \rfloor d\overline z_J \rangle = \langle d\overline z_k \wedge \alpha, d\overline z_J \rangle$$
for each $(0,q-1)$-form $\alpha .$

The complex Witten-Laplacian on $(0,q)$-forms is then given by 
\begin{equation}
\Delta^{(0,q)}_{\varphi} = \overline D_q \, \overline D_q^* + \overline D_{q+1}^* \, \overline D_{q+1},
\end{equation}
\index{$\Delta^{(0,q)}_{\varphi}$}
for $q=1,\dots, n-1.$

The general $\ovli D$-complex  has the form
\begin{equation}\label{schr14} 
L_{(0,q-1)}^2(\mathbb C^n ) \underset{\underset{\ovli D_q^* }{\longleftarrow }}{\overset{\ovli D_q }{\longrightarrow}}
L^2_{(0,q)}(\mathbb C^n )   \underset{\underset{\ovli D_{q+1}^* }{\longleftarrow }}{\overset{\ovli D_{q+1} }{\longrightarrow}}
L^2_{(0,q+1)}(\mathbb C^n )  \;. 
\end{equation}

It follows that
\begin{equation}
\overline D_{q+1} \, \Delta^{(0,q)}_{\varphi } = \Delta^{(0,q+1)}_{\varphi } \, \overline D_{q+1} \ \  {\text{and}} \ \ 
\overline D_{q+1}^* \, \Delta^{(0,q+1)}_{\varphi } = \Delta^{(0,q)}_{\varphi } \, \overline D_{q+1}^* .
\end{equation}
We remark that 
\begin{equation}
\overline D_q ^* h = \sum_{k=1}^n \sum_{|J|=q} \kern-1pt{}^{\prime}  \, Z_k^*(h_J)\, d\overline z_k  \rfloor d\overline z_J  =  \sum_{|K|=q-1} \kern-4pt{}^{\prime}  \sum_{k=1}^n  \, Z_k^*(h_{kK})\, d\overline z_K.
\end{equation}
In particular we get for a function $v\in L^2(\mathbb{C}^n)$
\begin{equation}
\Delta^{(0,0)}_{\varphi }v =  \overline D_{1}^* \, \overline D_{1} v = \sum_{j=1}^n Z_j^* Z_j(v),
\end{equation}
and for a $(0,1)$-form $g=\sum_{\ell=1}^n g_\ell\, d\ovli z_\ell \in L^2_{(0,1)}(\mathbb C^n ) $ we obtain
\begin{equation}
\Delta^{(0,1)}_{\varphi } g= (\overline D_1 \, \overline D_1^* + \overline D_{2}^* \, \overline D_{2}) g =
(\Delta^{(0,0)}_{\varphi } \otimes I)g  +  M_{\varphi} g,
\end{equation}
where we set
$$
M_{\varphi } g = \sum_{j=1}^n \left (\sum_{k=1}^n  \frac{\partial^2 \varphi}{\partial z_k \partial
    \ovli z_j} \, g_k \right )\, d\ovli z_j
$$
and 
\begin{equation*}
\begin{array}{ll}
(\Delta^{(0,0)}_{\varphi } \otimes I )\, g = \sum_{k=1}^n \Delta^{(0,0)}_\varphi \, g_k\, d\ovli z_k.
\end{array}
\end{equation*}

In general we have that
\begin{equation}\label{witten}
\Delta^{(0,q)}_{\varphi } = e^{-\varphi/2} \,  \square_{\varphi, q} \, e^{\varphi/2},
\end{equation}
for $q=0,1\dots, n.$

For more details see  \cite{HaHe} and \cite{Ga}.

In our case $\varphi (z)= |z_1|^2 + \dots + |z_n|^2$ we get
\begin{equation}\label{witten1}
\Delta^{(0,q)}_{\varphi } h =
\sum_{|J|=q}\kern-1pt{}^{\prime} \left (
 -\frac{1}{4}\, \triangle h_J + \frac{1}{2} \, \sum_{j=1}^n (\overline z_j \, h_{J \overline z_j}-z_j\, h_{J z_j})
 + \frac{1}{4} \, |z|^2 \, h_J +(q-\frac{n}{2})\, h_J
   \right )\,d\overline z_J ,
\end{equation}
for
$$h=\sum_{|J|=q}\kern-1pt{}^{\prime} h_J\,d\overline z_J  \in {\text{dom}}\, \Delta^{(0,q)}_{\varphi }  \subseteq L^2_{(0,q)}(\mathbb{C}^n).$$ 
The spectrum of $\Delta^{(0,0)}_{\varphi } ,$ even in a more general form, was calculated  by X. Ma and G. Marinescu, \cite{Ma1}, \cite{Ma}.

Using \eqref{witten} and Lemma \ref{davies} we get that $\Delta^{(0,q)}_{\varphi }$  and $\square_{\varphi, q}$ have the same spectrum. Hence by Theorem  \ref{sqspec} we obtain
\begin{theorem}Let $\varphi (z)= |z_1|^2 + \dots + |z_n|^2$ and $0\le q \le n.$ 
 The spectrum of the Witten-Laplacian $\Delta^{(0,q)}_{\varphi }$ consists of all  integers $\{ q, q+1, q+2, \dots\}$ each of which is of infinite multiplicity.
 \end{theorem}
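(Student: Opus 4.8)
The plan is to deduce the statement from Theorem~\ref{sqspec} by turning the conjugation formula \eqref{witten} into a genuine unitary equivalence of operators. The key remark is that multiplication by $e^{-\varphi/2}$ defines a unitary map
$$U : L^2_{(0,q)}(\mathbb{C}^n, e^{-\varphi}) \longrightarrow L^2_{(0,q)}(\mathbb{C}^n), \qquad Uf = e^{-\varphi/2}\, f,$$
because, coefficient by coefficient, $\int_{\mathbb{C}^n} |e^{-\varphi/2} f|^2\, d\lambda = \int_{\mathbb{C}^n} |f|^2 e^{-\varphi}\, d\lambda = \|f\|_\varphi^2$. Read in this light, \eqref{witten} asserts that $\Delta^{(0,q)}_\varphi = U\, \square_{\varphi,q}\, U^{-1}$, so the two operators are unitarily equivalent and therefore have identical spectra, with matching multiplicities. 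Granting this, the theorem is immediate: Theorem~\ref{sqspec} gives $\mathrm{spec}(\square_{\varphi,q}) = \{q, q+1, q+2, \dots\}$ with each eigenvalue of infinite multiplicity, and the same holds verbatim for $\Delta^{(0,q)}_\varphi$.

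To make this rigorous without heavy unbounded-operator theory, I would follow the route indicated by Lemma~\ref{davies} and transport the eigenbasis explicitly. By Theorem~\ref{sqspec} the $n$-fold products built from the functions $u_{k_j,m_j}$ and $v_{k_j,m_j}$ of \eqref{eig1}--\eqref{eig2}, including the mixed ones, form a complete orthogonal system of eigenfunctions of $\square_{\varphi,q}$ in the weighted space. Applying $U$ multiplies each of them by $e^{-\varphi/2}$; the resulting forms are of Schwartz class, lie in $\mathrm{dom}(\Delta^{(0,q)}_\varphi)$, and by \eqref{witten} are eigenfunctions of $\Delta^{(0,q)}_\varphi$ for the identical eigenvalue. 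Since $U$ is unitary, these images constitute a complete orthogonal system in $L^2_{(0,q)}(\mathbb{C}^n)$, which upon normalization is a complete orthonormal set. Lemma~\ref{davies} then shows that $\Delta^{(0,q)}_\varphi$ is essentially self-adjoint and that its spectrum is the closure of the set of attained eigenvalues, namely $\{q, q+1, q+2, \dots\}$; the infinite multiplicity is inherited from the free parameters $m_j$ exactly as before.

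The step I expect to require the most care is the justification that $U$ intertwines the two operators at the level of their self-adjoint closures, and not merely their formal differential expressions on smooth forms. The way around this is that $e^{\pm\varphi/2}$ is smooth and nowhere vanishing, so $U$ and $U^{-1}$ map forms with coefficients in $\mathcal{C}_0^\infty(\mathbb{C}^n)$ bijectively onto themselves; since both $\square_{\varphi,q}$ and $\Delta^{(0,q)}_\varphi$ are defined as the maximal closures from this common core, $U$ carries one core onto the other and hence extends to a unitary equivalence of the closed operators. In fact the appeal to Lemma~\ref{davies} in the second paragraph sidesteps this domain bookkeeping altogether: it suffices to exhibit a complete orthonormal family of eigenfunctions inside the domain, after which the spectral conclusion is automatic and no further computation is needed.
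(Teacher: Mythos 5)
Your proposal is correct and takes essentially the same route as the paper, which likewise obtains the result by combining the conjugation formula \eqref{witten} (i.e., the unitary equivalence of $\Delta^{(0,q)}_{\varphi}$ and $\square_{\varphi,q}$ under multiplication by $e^{-\varphi/2}$) with Lemma \ref{davies} and Theorem \ref{sqspec}. Your explicit verification that $f\mapsto e^{-\varphi/2}f$ is unitary and your transport of the eigenbasis simply spell out details the paper leaves implicit.
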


\bibliographystyle{amsplain}
\bibliography{mybibliography}

\end{document}